\newtheorem{theorem}{Theorem}
\newtheorem{lemma}{Lemma}[section]
\newtheorem{propo}[lemma]{Proposition}
\newtheorem{claim}[lemma]{Claim}
\newtheorem{corol}[lemma]{Corollary}
\newtheorem{defin}[lemma]{Definition}
\newcommand{\N}{\mathbb{N}}
\newcommand{\R}{\mathbb{R}}
\newcommand{\Lattice}{{\mathbb N^m}}
\newcommand{\Space}{{\mathbb R^m}}
\newcommand{\norm}[1]{{\|#1\|}}
\newcommand{\CF}{{\mathcal F}}
\newcommand{\CT}{{\mathcal T}}
\begin{document}

\title{Sofic representations of amenable groups.
\footnote{AMS
Subject Classification: Primary 20F65,
\, Research sponsored by OTKA Grants No. 69062 and NK 78439 }}

\author{G\'abor Elek, Endre Szab\'o}

\sloppy

\maketitle

\begin{abstract} Using probabilistic methods, Collins and Dykema proved that
the free product of two sofic groups amalgamated over a monotileably amenable
subgroup is sofic as well. 
We show that the restriction is unnecessary; the free product of two sofic
groups amalgamated over an arbitrary amenable
subgroup is sofic. We also prove a group theoretical analogue of a 
result of Kenley Jung.
A finitely generated group is amenable if and only if it has only one sofic
representation up to conjugacy equivalence. 
\end{abstract}
\vskip 0.2in
\noindent{\bf Keywords:} sofic groups, amenable groups, amalgamated products
\section{Introduction}
First, let us recall the notion of sofic groups (see \cite{Pes} for a survey)
. Let $\Gamma$ be a finitely
generated group with a symmetric generating system $S$. Let $G$ be a finite
graph, such that each directed edge of $G$ is labeled by
an element of $S$. We say that $G$ is an $r$-approximation of the Cayley-graph
of $\Gamma$, $Cay(\Gamma,S)$ if
there exists a subset $W\subseteq V(G)$ such that
\begin{itemize}
\item $|W|> (1-\frac{1}{r})| V(G)|$ and
\item if $p\in W$, then the $r$-neighborhood of $p$ is rooted isomorphic to
  the $r$-neighborhood of a vertex of the Cayley-graph of $\Gamma$ (as edge
  labeled graphs).
\end{itemize}
The group $\Gamma$ is sofic if for any $r\geq 1$ there exist
$r$-approximations of $Cay(\Gamma,S)$ by finite graphs.
We call a not necessarily finitely generated group sofic if all of its
finitely generated subgroups are sofic. Amenable and residually finite groups
are sofic. Free products, direct products, inverse and direct limits of sofic
groups are all sofic groups as well. Amenable extensions of sofic groups are
also sofic. It is important to note that no non-sofic group is known at this
moment. Several conjectures about groups, such as the Gottschalk's Conjecture,
 Kaplansky's Direct Finiteness Conjecture, the
Determinant Conjecture and the Connes Embedding Conjecture hold for sofic 
groups.
\noindent
Using probabilistic arguments, Collins and Dykema \cite{CD} proved that the 
free
products of sofic groups amalgamated over monotileably amenable groups are sofic.
We show that the restriction is not necessary.
\begin{theorem} \label{main} Free products of
sofic groups amalgamated over arbitrary amenable groups are sofic. 
\end{theorem}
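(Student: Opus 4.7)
The plan is to adapt the probabilistic gluing construction of Collins and Dykema \cite{CD}, replacing their monotileability hypothesis on the amalgamated subgroup $H$ with the Ornstein-Weiss quasi-tiling machinery, which holds for every amenable group. Fix symmetric generating sets $S_j$ of $G_j$ whose intersection generates $H$. Given an approximation parameter $r \geq 1$ and an error tolerance $\epsilon > 0$, the goal is to produce a finite $(S_1 \cup S_2)$-labeled graph that is an $r$-approximation of the Cayley graph of $G_1 *_H G_2$.

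The first ingredient is the Ornstein-Weiss theorem: choose finitely many $(r,\epsilon)$-invariant F{\o}lner sets $F_1, \ldots, F_k \subset H$, each containing $1_H$, so that every sufficiently invariant finite $H$-set can be covered, up to an $\epsilon$-fraction residue and $\epsilon$-fraction overlap, by translates of the $F_i$ in prescribed proportions $\alpha_1, \ldots, \alpha_k$. The second ingredient is a pair of sufficiently fine sofic approximations $X_j$ of $G_j$: on each $X_j$, the subgraph $X_j^H$ of edges labeled by $H$-generators decomposes, on a $(1-\epsilon)$-fraction of its vertices, into large approximate $H$-components to which the quasi-tiling theorem lifts.

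The core of the construction is a randomized gluing step. After passing to disjoint unions that balance the total counts of $F_i$-tiles on the two sides, I would choose, for each type $i$, a uniformly random bijection between the $F_i$-tiles of $X_1^H$ and those of $X_2^H$. For each paired tile, identify the two copies vertex-by-vertex via the tautological $F_i$-coordinate, then retain the $S_j$-labeled edges from $X_j$. By the normal form theorem for amalgamated free products, at a vertex lying strictly in the interior of a matched tile the $r$-ball in the glued graph is isomorphic to the corresponding ball in the Cayley graph of $G_1 *_H G_2$. A concentration argument, in the spirit of \cite{CD}, then shows that with positive probability such interior vertices form a $(1 - O(\epsilon))$-fraction.

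The main obstacle, absent from the monotile setting of \cite{CD}, is the overlap and residue inherent in quasi-tilings: a vertex of $X_j^H$ may lie in several overlapping tiles, or in none at all. The technical heart of the proof will be showing that these overlap/residue vertices contribute only an $O(\epsilon)$ defect in the final graph. This requires a careful counting argument exploiting the uniform $\alpha_i$-densities in the Ornstein-Weiss theorem, together with a hierarchical or ``peeling'' choice of tile identifications so that, even where overlaps exist, each vertex is identified according to a single consistent tile. Sending $\epsilon \to 0$ then yields arbitrarily good $r$-approximations of the Cayley graph of $G_1 *_H G_2$.
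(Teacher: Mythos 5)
Your program --- redo Collins--Dykema with Ornstein--Weiss quasi-tilings in place of monotilings --- has a gap exactly where the monotileability hypothesis was doing its work, namely in the step you describe as ``passing to disjoint unions that balance the total counts of $F_i$-tiles on the two sides.'' With a single tile type this balancing is trivial (take $N_2$ copies of one side and $N_1$ of the other), but with $k\ge 2$ tile types you need the two count vectors $(n_1^{(1)},\dots,n_k^{(1)})$ and $(n_1^{(2)},\dots,n_k^{(2)})$ to be parallel, and neither Ornstein--Weiss nor any standard quasi-tiling statement gives you tilings ``in prescribed proportions $\alpha_1,\dots,\alpha_k$'': the proportions depend on the set being tiled. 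Overcoming this is the actual content of the paper's argument: Lemma~\ref{nearby-vectors} together with a compactness argument on the unit vectors $\sigma(\beta_F)$ (Proposition~\ref{mono}) shows that, after passing to a subsequence of F\o lner sets along which the proportion vectors converge, every sufficiently good approximation of the amenable group is $r$-isomorphic to an \emph{integer multiple of one fixed} linear combination $\alpha\cdot\CT$; only then can the two sides be matched as $a\cdot D$ and $b\cdot D$ for a common graph $D$. Without some such device your type-by-type random matching of tiles cannot even be defined. A related point: the tiling must be applied not to subsets of $H$ but to the $Z$-colored parts of the sofic approximations of $G_1$ and $G_2$, which are merely graphs locally modelled on $Cay(\Gamma,Z)$; this requires the tiling theorem for sofic approximations of amenable groups (Proposition~\ref{tiling}, i.e.\ \cite{Elekame}), not the group-theoretic Ornstein--Weiss statement.

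The second gap is in the freeness step. The normal form theorem tells you what the Cayley graph of $G_1*_H G_2$ looks like; it does not tell you that your glued graph looks like that. After identifying the $H$-parts you must rule out short simple cycles that alternate between $S_1$-labelled and $S_2$-labelled edges (these would be false relations among reduced words $g_1h_1g_2h_2\cdots$), and this is the technical heart of both approaches: Collins--Dykema kill such cycles by a delicate probabilistic estimate over the random intertwiner, while the present paper does it deterministically by gluing $|S|/a$ copies of $A$ and $|S|/b$ copies of $B$ along a pair of partitions whose incidence graph has girth greater than $2r$ (Proposition~\ref{partition-construction} and Corollary~\ref{partition-corollary}), so that any short cycle is forced entirely into a single copy of $A$ or of $B$. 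Your proposal asserts the conclusion (``the $r$-ball in the glued graph is isomorphic to the corresponding ball in the Cayley graph'') and defers the proof to ``a concentration argument in the spirit of \cite{CD}''; it is not clear that the randomness you allow (a random matching of tiles, with the identification inside each matched pair fixed by the $F_i$-coordinate) suffices, especially once overlap and residue vertices are present. In short, the two genuinely hard points of the theorem --- forcing the tile statistics of the two sides to agree, and preventing short mixed cycles --- are precisely the two points your outline leaves open, and the paper's deterministic route (common tile $D$ plus high-girth two-partition gluing) was designed to dispose of both at once.
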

In \cite{KY}, Kenley Jung gave a characterization of amenable von
Neumann algebras among ``embeddable'' von Neumann algebras. He proved that
an embeddable von Neumann algebra $M$ is amenable if and only if any two
embeddings of $M$ into $R^\omega$ are conjugate by a unitary element. 
Before stating
our result, let us briefly recall an equivalent definition of soficity. 
Let $\omega$
be a non-principal ultrafilter on the natural numbers and
$\lim_\omega:l^\infty(\N) \to \R$ be the associated ultralimit.
Consider the product of finite symmetric groups $\prod^\infty_{n=1} S_n$. If
$g\in S_n$ then let the rank of $g$ defined as
$rk(g):=1-\frac{fix(\gamma)}{n}$, where $fix(g)$ is the number of elements in
the set $\{1,2,\dots,n\}$ fixed by the permutation $g$. Let $H\subset
\prod^\infty_{n=1} S_n$ be defined as follows.
$$H:=\{ (g_n)^\infty_{n=1}\,\mid\, \lim_\omega rk(g)=0\,\}$$
Then $H$ is a normal subgroup of $\prod^\infty_{n=1} S_n$ and
$\Sigma=\prod^\infty_{n=1} S_n/H$ is a simple group with a rank function
$$rk(\underline{g})=\lim_\omega rk(q_n)\,.$$
We call an embedding $\Gamma\to\Sigma$ faithful if $rk(\gamma)=1$ for any
$1\neq\gamma\in\Gamma$. It is easy to see that sofic approximations lead to
faithful embeddings of countable sofic groups. Conversely, any embeddable group
is sofic \cite{Ehyper}.

The embeddings $\phi$ and $\psi$ are {\it conjugate}
if there exists $\underline{g}\in\Sigma$ such that
$\phi(\gamma)=g\psi(\gamma)g^{-1}$ for any $\gamma\in\Gamma$.
We prove the following theorem.
\begin{theorem}\label{tetel2}
A finitely generated group $\Gamma$ is amenable if and only if any two
faithful embeddings of $\Gamma$ into $\Sigma$ are conjugate.
\end{theorem}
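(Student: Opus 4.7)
I would prove the two implications separately.

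For the direction ``$\Gamma$ amenable implies all faithful embeddings are conjugate'', fix faithful embeddings $\phi,\psi:\Gamma\to\Sigma$ and lift them to sequences $\sigma_n,\tau_n:\Gamma\to S_{k_n}$ of sofic approximations. The plan is to construct $g_n\in S_{k_n}$ such that $\mathrm{rk}\bigl(g_n\sigma_n(\gamma)g_n^{-1}\tau_n(\gamma)^{-1}\bigr)\to 0$ in the ultralimit for every $\gamma\in\Gamma$; the class of $(g_n)$ in $\Sigma$ is then the required conjugator. The construction of $g_n$ rests on the Ornstein--Weiss quasi-tiling theorem: outside an $o(1)$-fraction of vertices, one covers both $\sigma_n$ and $\tau_n$ by large near-F\o lner tiles on each of which the local structure is forced to coincide with a Cayley ball of $\Gamma$. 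A Hall marriage argument pairs $\sigma_n$-tiles with $\tau_n$-tiles of the same shape; each matched pair admits a canonical intertwining bijection coming from the common Cayley structure; and defining $g_n$ tile-by-tile (arbitrarily on the small residual set) yields the desired permutation.

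For the converse I would argue by contrapositive: assuming $\Gamma$ non-amenable, construct two non-conjugate faithful embeddings $\phi,\psi:\Gamma\to\Sigma$. The strategy is to attach to each faithful embedding a numerical invariant that is preserved under conjugation in $\Sigma$ but can vary across non-amenable sofic approximations. A natural candidate is the sofic entropy in the sense of Bowen of the Bernoulli shift $\{0,1\}^\Gamma$, computed with respect to the sofic approximation underlying the embedding. For amenable groups this entropy is unique (it coincides with the Kolmogorov--Sinai entropy), so it cannot distinguish embeddings; but for non-amenable $\Gamma$ one can arrange two sofic approximations that produce different entropies for the same $\Gamma$-system, using paradoxicality to insert additional random structure into one of them without destroying the sofic property. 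The resulting two embeddings then cannot be conjugate.

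The main obstacle is the non-amenable direction. The amenable direction is essentially an ultralimit rephrasing of the Connes--Feldman--Weiss and Ornstein--Weiss uniqueness of orbit equivalence, and the technology is largely standard once quasi-tiling is in hand. The difficulty in the converse is twofold: first, to single out a conjugation invariant of embeddings into $\Sigma$ that genuinely sees non-amenability; and second, to produce two faithful embeddings of all of $\Gamma$ (not merely of a free subgroup) that realise different values of this invariant. Particular care is needed to verify that the candidate invariant is well-defined at the level of $\Sigma$, and not only at the level of approximating sequences modulo the relation $\lim_\omega rk=0$.
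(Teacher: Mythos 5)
Your plan has a genuine gap in each direction, and the one in the non-amenable direction is fatal as stated. The invariant you propose --- the sofic entropy of the Bernoulli shift $\{0,1\}^\Gamma$ computed along the approximation underlying the embedding --- cannot distinguish any two sofic approximations: by Bowen's theorem the sofic entropy of a Bernoulli shift equals the Shannon entropy of the base ($\log 2$ here) for \emph{every} sofic approximation of \emph{every} sofic group. So this invariant is constant, and the vague fallback (``insert additional random structure using paradoxicality'') is not an argument; whether one can always realise two different entropy values for some action of an arbitrary non-amenable sofic group is a hard problem, known only in special situations (e.g.\ property (T)). The paper's route is far more elementary and you should compare it: the conjugacy-invariant it exploits is the quantitative failure of hyperfiniteness (Proposition~\ref{hyp}). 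One starts from any sofic approximation $\{G_n\}$ of the non-amenable $\Gamma$, which by the isoperimetric inequality cannot be chopped into pieces of size $\le r(n)$ by deleting an $\epsilon$-fraction of edges, and builds a second approximation $\{H_n\}$ as a disjoint union of many copies of the small graphs $G_{r(n)}$ plus isolated vertices. A conjugacy in $\Sigma$ would force $G_{n_k}$ and $H_{n_k}$ to be $[\frac{1}{2\epsilon}]$-isomorphic for some $n_k$, which would chop $G_{n_k}$ into components of size at most $r(n_k)$ after deleting few edges --- a contradiction. No entropy theory is needed.

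In the amenable direction your tiling strategy is in the same spirit as the paper's, but the sentence ``a Hall marriage argument pairs $\sigma_n$-tiles with $\tau_n$-tiles of the same shape'' conceals exactly the step where the work lies. Ornstein--Weiss quasi-tiling gives no control over the \emph{multiplicities} with which the finitely many tile shapes occur, and Hall's condition for a perfect matching between the two tile families requires those multiplicities to agree; a priori the two approximations could be tiled by the same shapes in wildly different proportions. The paper resolves this with Lemma~\ref{nearby-vectors} and Proposition~\ref{mono}: it records each tiling as a lattice vector of multiplicities, shows that two linear combinations with nearly parallel coefficient vectors are $r$-isomorphic, and uses a compactness (accumulation-point) argument on the normalized vectors to produce one fixed combination $\alpha\cdot\CT$ such that every sufficiently good approximation is equivalent to an integer multiple of it. With that normalization both $G_n$ and $H_n$ become $r$-isomorphic to multiples of the same model graph, and the conjugating permutations $g_n$ are read off from the resulting partial isomorphisms. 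Your outline needs this (or an equivalent frequency-matching argument) inserted before the matching step can be carried out.
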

\vskip 0.1in
\noindent
{\bf Note added}: Liviu Paunescu informed us that he also has a proof of
Theorem 1. using a different argument. \cite{Pau}
\section{ Sofic approximations of amenable groups}
\begin{defin}
  Let $E(A)$ denote the set of edges in a colored graph $A$.
  We say that two colored graphs $A$ and $B$ are \emph{$r$-isomorphic}
  for some $r>0$
  if there are subgraphs $A'\subseteq A$ and $B'\subseteq B$
  such that 
  $$
  |E(A')|\ge\left(1-\textstyle\frac1r\right)|E(A)| \;,\quad
  |E(B')|\ge\left(1-\textstyle\frac1r\right)|E(B)|
  $$
  and $A'$ isomorphic to $B'$ (as colored graphs). The isomorphism
  between $A'$ and $B'$
  is called an \emph{$r$-isomorphism}.
\end{defin}

\begin{lemma} \label{r-isomorphism-transitivity}
  Let $A$, $B$ and $C$ be colored graphs,
  $A$ is $2r$-isomorphic to $B$ and $B$ is $2r$-isomorphic to $C$.
  Then $A$ is $r$-isomorphic to $C$.
  If $A$ is a $2r$-approximation of a Cayley graph and
  every vertex of $A$ has degree at most $d$
  then there is a bound $R_{\rm appr}(r,d)$ such that
  every colored graph $R_{\rm appr}(r,d)$-isomorphic to $A$ is
  an $r$-approximation of the Cayley graph.
\end{lemma}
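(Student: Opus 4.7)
The plan is to prove both statements by the standard ``intersect the witnesses'' technique; for the second statement the bounded degree $d$ is additionally used to convert edge-level discrepancies into vertex-level discrepancies.

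For transitivity, let $\alpha\colon A_0\to B_1$ and $\beta\colon B_2\to C_0$ be the two $2r$-isomorphisms, so $A_0\subseteq A$, $B_1,B_2\subseteq B$, $C_0\subseteq C$ each omit at most a $\tfrac{1}{2r}$-fraction of edges. Set $B_0:=B_1\cap B_2$; inclusion--exclusion gives $|E(B_0)|\ge (1-\tfrac{1}{r})|E(B)|$. Define $A'':=\alpha^{-1}(B_0)\subseteq A$ and $C'':=\beta(B_0)\subseteq C$; then $\beta\circ\alpha^{-1}\colon A''\to C''$ is a colored isomorphism with $|E(A'')|=|E(C'')|=|E(B_0)|$. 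The crucial observation is that $|E(A)\setminus E(A'')|$ is bounded by $|E(A)\setminus E(A_0)|+|E(B_1)\setminus E(B_0)|\le \tfrac{1}{2r}|E(A)|+\tfrac{1}{2r}|E(B)|$; since the isomorphism $A_0\cong B_1$ already forces $|E(A)|$ and $|E(B)|$ to agree up to the factor $\tfrac{1}{1-1/(2r)}$, a short calculation upgrades this to $|E(A'')|\ge(1-\tfrac{1}{r})|E(A)|$ (with a symmetric estimate on the $C$-side), yielding the desired $r$-isomorphism.

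For the second claim, let $W\subseteq V(A)$ be the good set of the $2r$-approximation, and suppose $D$ is $R$-isomorphic to $A$ through $\iota\colon D_1\to A_1$, where $R:=R_{\rm appr}(r,d)$ is yet to be chosen. Declare $v\in V(D)$ \emph{good} if $v\in V(D_1)$, the $r$-ball of $v$ in $D$ is contained in $D_1$, the $r$-ball of $\iota(v)$ in $A$ is contained in $A_1$, and $\iota(v)\in W$. For such $v$ the $r$-ball of $v$ in $D$ equals its $r$-ball in $D_1$, which is $\iota$-isomorphic to the $r$-ball of $\iota(v)$ in $A_1$, hence to the $r$-ball of $\iota(v)$ in $A$, which is finally rooted isomorphic to the $r$-ball of some Cayley-graph vertex. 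Thus the good vertices witness the required property for $D$.

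The remaining work is estimating the number of bad vertices. Since $A$ has maximum degree $d$ and this bound transfers to $D$ on $V(D_1)$, each edge removed from $A_1\subseteq A$ or $D_1\subseteq D$ can spoil the ball-containment conditions for at most $2(1+d+\cdots+d^{r-1})=:C(r,d)$ vertices. Hence at most $\tfrac{C(r,d)}{R}\bigl(|E(A)|+|E(D)|\bigr)\le \tfrac{d\,C(r,d)}{R}\,|V(A)|$ vertices (up to comparable constants) fail the ball conditions, at most $\tfrac{1}{2r}|V(A)|$ fail $\iota(v)\in W$, and the vertices outside $V(D_1)$ are absorbed in the first tally. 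Since $|V(A)|$ and $|V(D)|$ are comparable (again by the bounded-degree assumption and the fact that only a small fraction of edges are missing from $A_1$), choosing $R_{\rm appr}(r,d)$ sufficiently large relative to $d$ and $r$ makes the total bad fraction of $V(D)$ strictly below $\tfrac{1}{r}$. The only real obstacle is the careful bookkeeping needed to extract an explicit threshold $R_{\rm appr}(r,d)$ depending only on $r$ and $d$ while simultaneously controlling the small discrepancy between $|V(A)|$ and $|V(D)|$.
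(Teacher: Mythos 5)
Your overall strategy --- intersecting the two witness subgraphs of $B$ for transitivity, and a ball-counting argument driven by the degree bound for the approximation statement --- is exactly what the paper leaves implicit behind ``clear from the definition,'' so the route is the right one. However, the ``short calculation'' you invoke in the transitivity step does not deliver the stated constant. From $|E(A)\setminus E(A'')|\le\frac{1}{2r}|E(A)|+\frac{1}{2r}|E(B)|$ together with $|E(B)|\le\frac{2r}{2r-1}|E(A)|$ you only obtain
$$
|E(A)\setminus E(A'')|\le\frac{4r-1}{2r(2r-1)}\,|E(A)|,
$$
and $\frac{4r-1}{2r(2r-1)}>\frac1r$ for every $r\ge1$, so as written the step fails. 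The clean fix is a case split: if $|E(B)|\le|E(A)|$, your displayed bound is at most $\frac{1}{2r}|E(A)|+\frac{1}{2r}|E(A)|=\frac1r|E(A)|$ directly; if $|E(B)|>|E(A)|$, then $|E(A'')|=|E(B_0)|\ge(1-\frac1r)|E(B)|>(1-\frac1r)|E(A)|$, again as required, and symmetrically on the $C$-side. With that one-line repair the first half is complete.

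In the second half the counting is morally correct, but two points need more than being ``absorbed in the first tally.'' First, an $R$-isomorphism only sees edges, so nothing prevents $D$ from carrying many isolated vertices (or bounds $|V(D)|$ by $|V(A)|$ at all); such vertices are all bad for a nontrivial Cayley graph, and the statement is literally false without the tacit convention that the graphs involved have no isolated vertices --- this is a defect of the lemma as stated rather than of your proof, and it is harmless in every application in the paper, but it should be said. Second, when you charge each deleted edge to the at most $C(r,d)$ vertices whose $r$-ball it meets, you are using a degree bound on $D$ along the connecting paths, and $D$ is only known to have degree $\le d$ \emph{inside} $D_1$. This is repairable: any vertex of $D$ of degree $>d$ is incident to an edge of $E(D)\setminus E(D_1)$, so when one grows balls outward from the endpoints of the bad edges, every step either uses a $D_1$-edge (at most $d$ choices) or lands on a vertex already counted as an endpoint of a bad edge; this gives a bound of the form $(1+d)^{r}\cdot\frac{2}{R}|E(D)|$ on the vertices failing the ball conditions, which suffices. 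These are repairs of bookkeeping rather than of the idea, but both the constant in part one and the degree/isolated-vertex issues in part two are genuine holes in the write-up as it stands.
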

\begin{proof}
  Clear from the definition.
\end{proof}

\begin{defin} \label{linear-combination-def}
  Let $\CT=T_1,T_2,\dots T_m$ be a finite sequence of colored graphs.
  Their \emph{linear combination with coefficient vector 
    $\alpha=(\alpha_1,\alpha_2,\dots\alpha_m)\in\Lattice$},
  denoted by $\alpha\cdot\CT$, is the disjoint union of
  $\alpha_1$ copies of $T_1$, $\alpha_2$ copies of $T_2$, $\dots$
  and $\alpha_m$ copies of $T_m$.
  As a special case,
  if $m=1$ then we talk about the \emph{integer multiples} of $T_1$.
\end{defin}

\begin{defin} \label{norm-def}\label{sigma-def}
  For vectors 
  $0\neq\alpha\in\Space$
  we define the unit vector
  $
  \sigma(\alpha)=\frac\alpha{\norm\alpha}
  $
  where $\norm\alpha$ denote the (usual) length of $\alpha$.
\end{defin}

\begin{lemma} \label{nearby-vectors}
  Let $\CT=\{T_1,T_2,\dots T_m\}$ be a finite sequence of colored
  graphs, each having at least one edge.
  For each integer $r>0$ there is an integer $M(r)>0$ 
  (also depending on $\CT$)
  such that
  whenever $\alpha,\beta\in\Lattice$ are nonzero lattice vectors with
  $\norm\beta\ge M(r)\norm\alpha$
  and $\norm{\sigma(\alpha)-\sigma(\beta)}\le\frac{1}{M(r)}$
  then the graph $\beta\cdot\CT$ is $r$-isomorphic to an integer
  multiple of $\alpha\cdot\CT$.
\end{lemma}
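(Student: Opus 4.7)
I will take $k=\lfloor\|\beta\|/\|\alpha\|\rfloor$, so $k\alpha$ has roughly the length of $\beta$ in the direction of $\alpha$, and exhibit an $r$-isomorphism between $\beta\cdot\CT$ and $k\alpha\cdot\CT$. Set $a=\|\alpha\|$ and $b=\|\beta\|$; the hypothesis $b\ge M(r)\,a$ gives $k\ge M(r)$ and $ka\ge b-a\ge b(1-1/M(r))$. Both $\beta\cdot\CT$ and $k\alpha\cdot\CT$ are disjoint unions of copies of the fixed graphs $T_1,\dots,T_m$, so the natural candidate subgraphs $A'\subseteq\beta\cdot\CT$ and $B'\subseteq k\alpha\cdot\CT$ are formed by taking $\min(\beta_i,k\alpha_i)$ copies of $T_i$ on each side; these are visibly isomorphic, so it suffices to bound the number of unused copies.

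\textbf{Componentwise closeness.} Using $\beta_i=b\,\sigma(\beta)_i$ and $\alpha_i=a\,\sigma(\alpha)_i$, decompose
\[
\beta_i-k\alpha_i \;=\; b\bigl(\sigma(\beta)_i-\sigma(\alpha)_i\bigr) \;+\; (b-ka)\,\sigma(\alpha)_i.
\]
The first term has absolute value at most $b/M(r)$ by hypothesis. Since $0\le b-ka<a$, the second term has absolute value at most $a\,\sigma(\alpha)_i=\alpha_i$. Thus $|\beta_i-k\alpha_i|\le b/M(r)+\alpha_i$ for every $i$, uniformly whether $\alpha_i$ is zero or positive.

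\textbf{Edge accounting.} Let $e_i=|E(T_i)|$, $E_{\max}=\max_i e_i$, $E_{\min}=\min_i e_i\ge 1$. Summing and using $\sum_i\alpha_i\le\sqrt m\,\|\alpha\|_2=\sqrt m\,a$,
\[
\sum_i|\beta_i-k\alpha_i|\,e_i \;\le\; E_{\max}\!\left(\frac{mb}{M(r)}+\sqrt m\,a\right).
\]
For the denominators, non-negativity of $\beta$ gives $\sum_i\beta_i=\|\beta\|_1\ge\|\beta\|_2=b$, hence $|E(\beta\cdot\CT)|\ge E_{\min}\,b$; likewise $|E(k\alpha\cdot\CT)|\ge E_{\min}\,ka\ge E_{\min}\,b/2$ once $M(r)\ge 2$. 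Using $a/b\le 1/M(r)$, both loss-to-total ratios are bounded by a constant times $(E_{\max}/E_{\min})(m+\sqrt m)/M(r)$, so choosing $M(r)$ to be any integer exceeding, say, $2r(E_{\max}/E_{\min})(m+\sqrt m)$ forces both to be at most $1/r$ and yields the desired $r$-isomorphism.

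\textbf{Main obstacle.} The only conceptual step is to resist demanding componentwise domination $k\alpha\le\beta$ (which can fail when $\alpha$ has highly unbalanced coordinates and $\beta$ is only close in direction) and instead match whole copies of each $T_i$ separately on both sides. Once this is seen, the rest is an $\ell^1$-versus-$\ell^2$ bookkeeping exercise on the directional error.
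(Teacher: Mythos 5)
Your proof is correct and follows essentially the same route as the paper's: both compare $\beta\cdot\CT$ with $k\alpha\cdot\CT$ for $k=\lfloor\norm\beta/\norm\alpha\rfloor$, bound the coordinatewise discrepancy by splitting off the directional error $\norm{\sigma(\alpha)-\sigma(\beta)}$ from the length mismatch, and lower-bound the edge counts via $\norm\cdot_1\ge\norm\cdot_2$. The only differences are cosmetic (you track $E_{\min}$ and do the decomposition coordinatewise rather than bounding $\norm{\beta-k\alpha}$ first), yielding a marginally different but equally valid choice of $M(r)$.
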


\proof
  Let $E$ be the maximum number of edges in $T_i$ and
  let $t$ be the largest integer such that
  $\norm{t\alpha}\le\norm{\beta}$.
  Clearly $M(r)\le t$ and $\norm\alpha\ge1$, hence
  $$
  \norm{\beta-t\alpha}\le
  \norm{\alpha}+t\norm{\sigma(\beta)-\sigma(\alpha)} \le
  \norm\alpha + t{\textstyle\frac{1}{M(r)}} \le
  {\textstyle\frac{2}{M(r)}}\norm\alpha t \;,
  $$
  hence each coordinate of $\beta$ differs by at most
  $\frac{2}{M(r)}\norm\alpha t$
  from the corresponding coordinate of $t\alpha$.
  Therefore $\beta\cdot\CT$ can be obtained from $(t\alpha)\cdot\CT$
  by adding or deleting at most $\frac{2}{M(r)}\norm\alpha t$ copies
  of each $T_i$, and the total number of edges involved in this
  operation is at most $\frac{2mE}{M(r)}\norm\alpha t$.

  On the other hand, the sum of the coordinates of $t\alpha$
  is at least $\norm{t\alpha}$ (triangle inequality),
  hence the graph $t \alpha\cdot\CT$ has at least
  $\norm\alpha t$ edges, and 
  similarly, the graph $\beta\cdot\CT$ has at least
  $\norm\beta\ge\norm\alpha t$  edges.
  If we choose $M(r)\ge2mEr$ then we obtain that
  $\beta\cdot\CT$ is $r$-isomorphic to $(t\alpha)\cdot\CT$,
  which is indeed an integer multiple of $\alpha\cdot\CT$.
\qed

\begin{defin} \label{Folner-def}
  Let $\Gamma$ be a finitely generated group,
  $S\subseteq \Gamma$ a finite symmetric generating set.
  A \emph{F\o lner} sequence in the Cayley graph $Cay(\Gamma,S)$
  is a sequence $\CF=\{F_1,F_2,\dots\}$ of spanned subgraphs
  such that for all $r>0$ all but finitely many of the $F_n$ are
  $r$-approximations. The group $\Gamma$ is \emph{amenable}
  if $Cay(\Gamma,S)$ has a F\o lner sequence.
  Note, that this property depends only on the group,
  i.e. any other Cayley graph of $\Gamma$ has a F\o lner sequence.
  More generally, an arbitrary group is \emph{amenable} if
  each of its finitely generated subgroups is amenable.
\end{defin}

\begin{propo} \label{tiling}
  [Reformulation of {\cite[Theorem 2.]{Elekame}}]
  Let $\Gamma$ be a finitely generated amenable group,
  $S\subseteq \Gamma$ a finite symmetric generating set
  and $\CF$ a F\o lner sequence in $Cay(\Gamma,S)$.
  Then for each integer $r>0$ there is an integer $R_0(r)>0$
  (which depends also on $\CF$)
  and a finite subsequence $\CT\subset\CF$ such that
  each $R_0(r)$-approximation of $Cay(\Gamma,S)$
  is $r$-equivalent to some linear combination of $\CT$. 
\end{propo}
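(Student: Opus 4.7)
The proposition is explicitly flagged as a reformulation of \cite[Theorem 2]{Elekame}, so my plan is simply to invoke that quasi-tiling theorem and repackage its output in the graph-theoretic vocabulary set up above. That theorem produces, for any prescribed $\ep>0$, a finite subsequence $\CT=\{T_1,\dots,T_k\}\subset\CF$ and a threshold $R$ such that every $R$-approximation $G$ of $Cay(\Gamma,S)$ contains pairwise vertex-disjoint induced subgraphs $U_1,\dots,U_N\subseteq G$, each colored-graph isomorphic to some $T_j$ and each sitting inside a patch of $G$ that is isomorphic as a colored graph to a ball in $Cay(\Gamma,S)$ of radius exceeding $\diam(T_j)$, whose union covers at least $(1-\ep)|V(G)|$ vertices. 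Given the target $r$ I would choose $\ep=\ep(r,|S|)$ sufficiently small (precise value determined below) and put $R_0(r):=R$ for this $\ep$.

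From this quasi-tiling I would extract the two subgraphs required by the definition of $r$-isomorphism. Let $\alpha_j$ be the number of $U_i$ isomorphic to $T_j$, let $A'\subseteq G$ be the spanning subgraph whose edges are exactly the internal edges of the tiles $U_i$, and set $B':=\alpha\cdot\CT$. The identification of each $U_i$ with its model yields a colored-graph isomorphism $A'\cong\alpha\cdot\CT=B'$, so on the linear-combination side no edges need to be removed. On the $G$ side the deleted edges $E(G)\setminus E(A')$ split into two classes: those incident to an uncovered vertex, of which there are at most $|S|\,\ep\,|V(G)|$; and those joining two distinct tiles. Because each $U_i$ lies inside a neighborhood of $G$ isomorphic to a ball in $Cay(\Gamma,S)$, a cross-tile edge must emanate from the F\o lner boundary of $U_i$, and the total size of these boundaries is again $O(\ep|V(G)|)$ by the F\o lner property of the $T_j\in\CF$.

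The only genuine content is the quantitative bookkeeping: one must choose $\ep$ small enough (in terms of $r$, $|S|$, and the edge counts of the individual $T_j$) to force $|E(G)\setminus E(A')|\le|E(G)|/r$, and the approximation quality $R$ must be correspondingly large so that the local isomorphism with the Cayley graph extends over balls bigger than $\diam(T_j)$, validating the F\o lner boundary estimate. This absorption of constants is the principal --- and essentially the only --- obstacle, and it is routine once the parameters of \cite[Theorem 2]{Elekame} are unpacked. With these choices, $A'$ and $B'$ fulfil Definition \ref{r-isomorphism-transitivity}'s notion of $r$-isomorphism, exhibiting the $R_0(r)$-approximation $G$ as $r$-equivalent to the linear combination $\alpha\cdot\CT$ of the chosen finite subsequence.
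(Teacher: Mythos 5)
Your proposal matches the paper's treatment: the paper gives no proof of Proposition~\ref{tiling} at all, presenting it purely as a reformulation of \cite[Theorem 2]{Elekame}, which is exactly the theorem you invoke. Your additional bookkeeping (taking $A'$ to be the internal edges of the tiles, $B'=\alpha\cdot\CT$, and absorbing the uncovered-vertex and cross-tile edges into the $\frac1r|E(G)|$ allowance via the F\o lner property of the $T_j$) is the routine translation the authors leave implicit, and it is sound.
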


\begin{propo} \label{mono}
  Let $\Gamma$ be a finitely generated amenable group,
  $S\subseteq \Gamma$ a finite symmetric generating set
  and $\CF$ a F\o lner sequence in $Cay(\Gamma,S)$.
  Then for each integer $r>0$ there is an integer $R_{\rm amenable}(r)$
  (which depends also on $\CF$)
  and a finite subsequence
  $\CT\subset\CF$ of say $m$ members
  and a lattice vector  $0\neq\alpha\in\Lattice$
  with the following property.
  Each member of $\CT$ is an $2r$-approximation of $Cay(\Gamma,S)$,
  and each $R_{\rm amenable}(r)$-approximation of $Cay(\Gamma,S)$
  is $2r$-equivalent to 
  an integer multiple of the (fixed) linear combination
  $\alpha\cdot\CT$.
  
\end{propo}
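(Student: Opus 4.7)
The goal is to upgrade the conclusion of Proposition~\ref{tiling} from ``some linear combination'' to ``integer multiple of a fixed linear combination $\alpha\cdot\CT$''. The extra leverage comes from Lemma~\ref{nearby-vectors}, which converts any linear combination whose coefficient vector is nearly parallel to $\alpha$ into an integer multiple of $\alpha\cdot\CT$.

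First, I would invoke Proposition~\ref{tiling} with a refined parameter $r'\ge 4r$, producing a finite subsequence $\CT\subset\CF$ of, say, $m$ members and a threshold $R_0$ such that every $R_0$-approximation $A$ of $Cay(\Gamma,S)$ is $r'$-equivalent to some linear combination $\gamma_A\cdot\CT$. By first passing to a sufficiently deep tail of $\CF$, I can additionally ensure each $T_i\in\CT$ is itself a $2r$-approximation, fulfilling one part of the conclusion.

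The critical second step is to establish \emph{directional rigidity}: the direction vectors $\sigma(\gamma_A)\in\Space$ arising from valid decompositions of $R_0$-approximations all cluster on the unit sphere around a single direction $\sigma_*$. Morally this reflects the Ornstein--Weiss-type quasi-tiling underlying \cite{Elekame}: the tiles $T_i$ carry prescribed natural densities in the Cayley graph, so any tiling of a large approximation must use them in approximately fixed proportions. Concretely, enlarging the threshold to some $R_1\ge R_0$ lets me enforce $\|\sigma(\gamma_A)-\sigma_*\|\le 1/M(r')$ for every $R_1$-approximation $A$, where $M(r')$ is from Lemma~\ref{nearby-vectors}.

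Third, fix a lattice vector $\alpha\in\Lattice$ with $\|\sigma(\alpha)-\sigma_*\|\le 1/M(r')$. Choose $R_{\rm amenable}(r)$ large enough that every $R_{\rm amenable}$-approximation $A$ forces $\|\gamma_A\|\ge M(r')\|\alpha\|$ (this only costs a linear lower bound on $|E(A)|$). Lemma~\ref{nearby-vectors} then yields that $\gamma_A\cdot\CT$ is $r'$-isomorphic to some integer multiple $k\alpha\cdot\CT$. Combining with the $r'$-equivalence of $A$ to $\gamma_A\cdot\CT$ via Lemma~\ref{r-isomorphism-transitivity}, and choosing $r'=4r$, one concludes that $A$ is $2r$-equivalent to $k\alpha\cdot\CT$, as required.

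The principal obstacle is the directional-rigidity claim of the second step. While it is morally clear from the Ornstein--Weiss philosophy of amenable tilings (fixed tile densities), it is not visible from the quoted statement of Proposition~\ref{tiling} alone, so either a closer look at the quasi-tiling construction of \cite{Elekame} is needed to track densities explicitly, or an auxiliary compactness argument on the unit sphere in $\Space$ combined with repeated applications of Proposition~\ref{tiling} at progressively refined parameters must be run to extract a canonical $\sigma_*$.
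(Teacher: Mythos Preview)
Your outline is accurate in spirit and you have correctly isolated the real difficulty: the ``directional rigidity'' claim that the coefficient vectors $\sigma(\gamma_A)$ of \emph{arbitrary} $R_0$-approximations $A$ cluster around a fixed direction $\sigma_*$. As you yourself note, this does not follow from the statement of Proposition~\ref{tiling}, and neither of your two suggested remedies is carried out. So as written there is a gap.

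The paper's resolution is worth knowing, because it sidesteps the rigidity claim entirely rather than proving it. The point is that you never need directional control over the coefficient vector of a \emph{general} approximation; you only need it for members of the F\o lner sequence $\CF$ itself, and for those a soft compactness argument suffices. Concretely: after the first application of Proposition~\ref{tiling} (with parameter $8r$, yielding $\CT$), associate to each remaining $F\in\CF$ a vector $\beta_F$ with $\beta_F\cdot\CT$ $8r$-isomorphic to $F$. The unit vectors $\sigma(\beta_F)$ accumulate at some $u$; fix one $H\in\CF$ with $\sigma(\beta_H)$ close to $u$, and pass to the sub-F\o lner-sequence of those $F$ with $\sigma(\beta_F)$ close to $u$ and $\norm{\beta_F}$ large. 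By Lemma~\ref{nearby-vectors}, each such $\beta_F\cdot\CT$ is $8r$-isomorphic to an integer multiple of $\beta_H\cdot\CT$. Now apply Proposition~\ref{tiling} a \emph{second} time, to this thinned F\o lner sequence, obtaining tiles $Q_1,\dots,Q_n$. An arbitrary $R_0(8r)$-approximation $X$ is then $8r$-isomorphic to a linear combination of the $Q_j$; each $Q_j$ is $8r$-isomorphic to $\beta_{Q_j}\cdot\CT$; and each $\beta_{Q_j}\cdot\CT$ is $8r$-isomorphic to an integer multiple of $\beta_H\cdot\CT$. Two uses of Lemma~\ref{r-isomorphism-transitivity} give that $X$ is $2r$-isomorphic to an integer multiple of $\alpha\cdot\CT$ with $\alpha=\beta_H$.

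So the missing idea is the \emph{second} pass through Proposition~\ref{tiling}: rather than forcing the direction of $\gamma_A$ for general $A$, one re-tiles $A$ by F\o lner pieces $Q_j$ whose directions have already been pinned down by compactness within $\CF$. Your hint about ``repeated applications of Proposition~\ref{tiling}'' was pointing the right way, but the purpose of the repetition is not to refine a canonical $\sigma_*$ for all approximations---it is to replace the uncontrolled tiles $\CT$ by controlled tiles $Q_j$ drawn from the thinned F\o lner sequence.
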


\proof
  By discarding a few elements from $\CF$ we may assume that the
  remaining elements are all $2r$-approximations of $Cay(\Gamma,S)$.
  We apply Proposition~\ref{tiling} with parameter $8r$,
  we obtain a subsequence $\CT\subset\CF$
  of say $m$ members such that
  every $R_0(8r)$-approximation of $Cay(\Gamma,S)$ is $8r$-isomorphic to some
  linear combination of $\CT$.
  Now we discard those elements of $\CF$ that are not
  $R_0(8r)$-approximation of $Cay(\Gamma,S)$.
  Note, that $\CT$ is no
  longer a subsequence of the remaining $\CF$.

  For each member $F\in\CF$ we choose a lattice vector
  $0\neq\beta_F\in\Lattice$
  such that $\beta_F\cdot\CT$ is $8r$-isomorphic to $F$.
  Now $\big\{\sigma(\beta_F)\big|F\in\CF\}$ is a sequence of unit vectors,
  so it has an accumulation point $u\in\Space$.
  We apply Lemma~\ref{nearby-vectors} to $\CT$ with parameter $8r$
  and obtain the bound $M(8r)$.
  Let us fix a member $H\in\CF$ such that 
  $\norm{\sigma(\beta_H)-u}<\frac{1}{2M(8r)}$,
  and then replace $\CF$ by the subsequence of those $F\in\CF$ which satisfy
  $\norm{\sigma(\beta_F)-u}<\frac{1}{2M(8r)}$ and 
  $\norm{\beta_F}\geq M(8r)\norm{\sigma(\beta_H)}$.
  The remaining sequence $\CF$ is still a F\o lner sequence
  and for each $F\in\CF$ we have
  $\norm{\sigma(\beta_F)-\sigma(\beta_H)}<\frac{1}{M(8r)}$,
  hence $\beta_F\cdot\CT$ is $8r$-isomorphic to
  an integer multiple of $\beta_H\cdot\CT$
  by Lemma~\ref{nearby-vectors}.

  Applying again Lemma~\ref{tiling} with parameter $8r$
  we obtain new finite subsequence $\{Q_1,Q_2,\dots Q_n\}$ of our
  modified $\CF$.
  Let $X$ be any $R_0(8)$-approximation of $Cay(\Gamma,S)$.
  Then $X$ is $8r$-isomorphic to a linear combination of the $Q_i$,
  each $Q_i$ is $8r$-isomorphic to the corresponding
  $\beta_{Q_i}\cdot\CT$,
  and in turn each $\beta_{Q_i}\cdot\CT$ is
  $8r$-isomorphic to an integer multiple of $\beta_H\cdot\CT$.
  Applying Lemma~\ref{r-isomorphism-transitivity} twice we obtain that
  $X$ is $2r$-isomorphic to an integer multiple of $\beta_H\cdot\CT$.
  The Lemma holds with $R_{\rm amenable}(r)=R_0(8r)$. \qed

\section{Amalgamated products}

\begin{defin} \label{incidence-graph}
Let $\alpha$ and $\beta$ be partitions of a finite set $S$. 
The {\em incidence graph} of $\alpha$ and $\beta$ is a bipartite
graph, whose two sets of vertices consist of the classes of $\alpha$
and the classes of $\beta$, and the edges are the elements of $S$, 
each element connects its
$\alpha$-class with its $\beta$-class.
\end{defin}

\begin{propo} [{proved in \cite[Step 1]{OnSofic}}]
  \label{partition-construction}
  For each triple $(a,b,r)$ of integers
  there is a finite set $S$ with two partitions $\alpha$ and $\beta$
  on it such that
  each $\alpha$-class has $a$ elements, each $\beta$-class has
  $b$ elements, an $\alpha$-class can meet a $\beta$-class in at
  most one element, and in the incidence graph of $\alpha$ and $\beta$
  each simple cycle is longer than $2r$.
\end{propo}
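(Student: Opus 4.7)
The plan is to realise the incidence graph as a finite quotient of the Bass--Serre tree of the free product $G = \Z/a * \Z/b$.

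First, let $G = \langle x,y \mid x^a = y^b = 1 \rangle$ and let $T$ be its Bass--Serre tree: the vertex set is the disjoint union $G/\langle x\rangle \sqcup G/\langle y\rangle$ (call these the $\alpha$-type and $\beta$-type vertices), and for each $g \in G$ there is one edge joining $g\langle x\rangle$ to $g\langle y\rangle$. Then $T$ is a bipartite $(a,b)$-regular tree, and $G$ acts on $T$ by left multiplication, freely and transitively on edges, while the vertex stabilisers are the conjugates of $\langle x\rangle$ and $\langle y\rangle$.

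Next, I would construct a finite-index torsion-free normal subgroup $H \triangleleft G$ in which every nontrivial element has translation length strictly greater than $2r$ on $T$. The translation length of a hyperbolic element of $G$ equals the syllable count of its cyclically reduced normal form, so elements of translation length at most $2r$ represent only finitely many conjugacy classes of $G$. The group $G$ is residually finite (free products of residually finite groups are, by Gruenberg) and virtually torsion-free (indeed virtually free), so by combining a torsion-free finite-index subgroup with a finite-index normal subgroup that avoids a fixed representative of each short conjugacy class, and then passing to the normal core, we obtain such an $H$.

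The quotient $T/H$ is then a finite, bipartite, $(a,b)$-biregular graph (because $H$ acts freely), and it has girth greater than $2r$: any cycle of length $\ell$ in $T/H$ lifts to a path in $T$ of length $\ell$ from some vertex $v$ to $h\cdot v$ with $1 \neq h \in H$, hence $\ell$ is at least the translation length of $h$, which exceeds $2r$. Taking $S = E(T/H)$ with $\alpha$ and $\beta$ the partitions of $S$ by incident $\alpha$- and $\beta$-vertex respectively, one gets a set in which each $\alpha$-class has size $a$, each $\beta$-class has size $b$, the incidence graph is $T/H$ of girth $> 2r$, and the ``at most one common element'' condition is automatic from girth $> 2$.

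The main obstacle is the construction of $H$, which combines residual finiteness, virtual torsion-freeness, and the finiteness of the set of short-translation-length conjugacy classes; these are classical facts for $\Z/a * \Z/b$ but form the technical core. A purely probabilistic alternative would be to show that a random $(a,b)$-biregular bipartite graph on sufficiently many vertices has only $O(1)$ short cycles in expectation and then to remove them by an edge-switching argument, yielding the same conclusion but in a less structural way.
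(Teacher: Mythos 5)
Your argument is correct and takes essentially the same route as the source the paper relies on: the proposition is not reproved here but cited from \cite[Step 1]{OnSofic}, where $S$ is obtained as the quotient of $\Z/a\ast\Z/b$ by a finite-index normal subgroup avoiding a suitable finite set of nontrivial elements (using residual finiteness of the free product), which is precisely your $T/H$ stated without the Bass--Serre packaging. As a minor simplification, the torsion-freeness of $H$ is automatic once $H$ avoids the finitely many nontrivial conjugacy classes of translation length at most $2r$, since torsion elements of $\Z/a\ast\Z/b$ are elliptic and hence have translation length $0$.
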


\begin{corol} \label{partition-corollary}
  Let $\alpha$, $\beta$ and $S$ be as in
  Proposition~\ref{partition-construction}.
  Suppose that the vertices of a graph are in $S$ and the endpoints of each 
  edge are 
  either in the same $\alpha$-class
  or in the same $\beta$-class.
  Then every simple cycle of length less than $2r$ 
  is  either entirely in a single $\alpha$-class
  or entirely in a single $\beta$-class. 
\end{corol}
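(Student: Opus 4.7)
The plan is to argue by contradiction, converting a short ``bichromatic'' simple cycle of the given graph into a short closed trail in the incidence graph of $\alpha$ and $\beta$, and then invoking the girth bound of Proposition~\ref{partition-construction}.

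Suppose, toward a contradiction, that $C$ is a simple cycle of length $L<2r$ contained neither in a single $\alpha$-class nor in a single $\beta$-class. Call an edge of $C$ an \emph{$\alpha$-edge} or a \emph{$\beta$-edge} according to whether its endpoints share an $\alpha$-class or a $\beta$-class (by the hypothesis of the corollary at least one holds). Decompose $C$ into maximal monochromatic arcs. Since $C$ itself is not monochromatic, these arcs alternate between $\alpha$- and $\beta$-type, with some number $s\ge 1$ of each; every $\alpha$-arc lies in a single $\alpha$-class and every $\beta$-arc in a single $\beta$-class.

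Next I would lift this arc-decomposition into the bipartite incidence graph: each arc becomes a vertex (its containing class), and each of the $2s$ ``switch'' vertices $w_1,\dots,w_{2s}$ of $C$, where two consecutive arcs meet, lies in both of the neighbouring classes and so corresponds to an edge of the incidence graph. Because $C$ is simple the vertices $w_i$ are distinct elements of $S$, so the $2s$ edges are distinct. We thereby obtain a closed trail of length $2s\le L<2r$ in the incidence graph, which must contain a simple cycle of length at most $2s<2r$, contradicting the property that every simple cycle in the incidence graph has length greater than $2r$.

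The one delicate point I anticipate is checking that the walk produced in the incidence graph is genuinely a closed trail, so that it really does contain a simple cycle of length at most $2s$; this uses the identification of the edges of the incidence graph with the elements of $S$, together with the simplicity of $C$. The degenerate case $s=1$ deserves a separate sentence: it would force two distinct edges between the same pair of classes, directly contradicting the fact that an $\alpha$-class meets a $\beta$-class in at most one element.
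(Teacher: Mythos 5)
Your argument is correct and is exactly the argument the paper treats as immediate (its entire proof reads ``Follows immediately from Definition~\ref{incidence-graph}''): a short non-monochromatic simple cycle, cut into alternating monochromatic arcs, projects via its switch vertices to a closed trail of length $2s\le L<2r$ in the incidence graph, contradicting the girth bound of Proposition~\ref{partition-construction}. Your separate treatment of the case $s=1$ via the ``at most one common element'' condition correctly handles the one degenerate point the paper's one-line proof glosses over.
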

\begin{proof}
  Follows immediately from Definition~\ref{incidence-graph}
\end{proof}

\begin{defin}
  Let $A$ be a colored graph and $Z$ a subset of the colors used in
  $A$.
  Then $A\big|_Z$ denotes the subgraph obtained from $A$ by
  omitting all edges whose color does not belong to $Z$.
  Suppose that $C$ is another colored graph and $\phi$ is an
  $r$-isomorphism between $A\big|_Z$ and $C\big|_Z$.
  We build another graph called the
  \emph{enhancement of $C$ with $A$ along $\phi$},
  denoted by $A\stackrel\phi\Rightarrow C$.
  We start from $C$ and add new edges to it.
  Namely, for each edge $a\to b$ of $A$ whose color does not belong to
  $Z$ we add a new edge $\phi(a)\to\phi(b)$ of the same color,
  provided that $\phi$ is defined at the endpoints $a$ and $b$.
\end{defin}

\begin{lemma} \label{glueing-lemma}
  Let $\Gamma$ be a finitely generated group,$X$ be a finite
  symmetric generating system of $\Gamma$ and $Z\subseteq X$.
  Let $A$ be a $2r$-approximation of $Cay(\Gamma,X)$
  such that every vertex has degree at most $d$.
  Then there is a bound $R_{\rm enh}(r,d)$ with the following property.
  Suppose that $C$ is another colored graph wit color set $Y$
  such that $X\cap Y=Z$ and
  $\phi$ is an $R_{\rm enh}(r,d)$-isomorphism
  between $A\big|_Z$ and $C\big|_Z$.
  Then $(A\stackrel\phi\Rightarrow C)\big|_{X}$ is
  $r$-isomorphic to $A$ and
  $(A\stackrel\phi\Rightarrow C)\big|_{Y}$ is isomorphic to $C$.
\end{lemma}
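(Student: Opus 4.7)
The plan is to construct explicit subgraphs on both sides and use $\phi$ (extended in the obvious way) as the required $r$-isomorphism. The second conclusion $(A\stackrel\phi\Rightarrow C)\big|_Y\cong C$ is essentially immediate: the edges added by the enhancement carry colors in $X\setminus Z$, which by the hypothesis $X\cap Y=Z$ lie outside $Y$, so restricting to $Y$-colored edges simply deletes the added edges and returns $C$ on its original vertex set.

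For the first conclusion, write $U=V(A'\big|_Z)$ for the domain of $\phi$ and $U'=\phi(U)=V(C'\big|_Z)$ for its image. I take as candidates the subgraph $A^{\ast}\subseteq A$ on vertex set $U$ whose edges are $E(A'\big|_Z)$ together with every non-$Z$-colored edge of $A$ both of whose endpoints lie in $U$, and the subgraph $B^{\ast}\subseteq(A\stackrel\phi\Rightarrow C)\big|_X$ on vertex set $U'$ whose edges are $E(C'\big|_Z)$ together with all added edges (each added edge by construction has both endpoints in $U'$). Then $\phi$ extends to a colored graph isomorphism $A^{\ast}\to B^{\ast}$: on the $Z$-colored edges it is already an isomorphism $A'\big|_Z\to C'\big|_Z$, while on a non-$Z$-colored edge $a\to b$ of $A^{\ast}$ it sends $a\to b$ to the added edge $\phi(a)\to\phi(b)$ of the same color, by the very definition of the enhancement.

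It remains to bound the edge-losses on each side. On the $B^{\ast}$ side only the $Z$-colored edges of $C$ outside $E(C'\big|_Z)$ are lost, and their count is at most $|E(C\big|_Z)|/R_{\rm enh}$, which is within the $r$-isomorphism budget once $R_{\rm enh}\ge r$. On the $A$ side the loss decomposes into the $Z$-colored edges outside $E(A'\big|_Z)$, bounded by $|E(A)|/R_{\rm enh}$, plus the non-$Z$-colored edges with at least one endpoint in $V(A)\setminus U$, bounded by $d\cdot|V(A)\setminus U|$. The $2r$-approximation hypothesis is essential here: letting $W\subseteq V(A)$ denote the $2r$-good vertex set, one has $|V(A)\setminus W|<|V(A)|/(2r)$ and every $w\in W$ carries the full Cayley-graph local structure in the color $Z$, so in particular has at least one $Z$-colored edge in $A$. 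Any $w\in W\setminus U$ must then have all of its $Z$-colored edges absent from $E(A'\big|_Z)$; a simple double-counting gives $|W\setminus U|\le 2|E(A)|/R_{\rm enh}$, hence $|V(A)\setminus U|<|V(A)|/(2r)+2|E(A)|/R_{\rm enh}$. Combining the contributions and choosing $R_{\rm enh}(r,d)$ a suitably large function of $r$ and $d$ keeps the total loss within $|E(A)|/r$.

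The main obstacle is the quantitative bookkeeping in the last step, which must pin down exactly how large $R_{\rm enh}$ needs to be relative to $r$ and the maximum degree $d$ in order to absorb the constants arising from the double-counting and the $2r$-approximation; in particular the argument uses in an essential way that the typical vertex of $A$ has near-Cayley $Z$-degree and that every removed non-$Z$-edge is charged to at most two endpoints. Everything else is a direct unpacking of the definitions of the enhancement construction and of $r$-isomorphism.
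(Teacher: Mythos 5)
Your construction is the right one, and it is essentially the argument the authors intend: the paper's own proof of this lemma is literally ``Clear from the definition,'' so your write-up supplies all the content the paper omits. The $Y$-part is indeed immediate, your subgraphs $A^{*}$ and $B^{*}$ are the correct candidates, and the extension of $\phi$ to the non-$Z$-colored edges is a colored-graph isomorphism (injectivity of $\phi$ guarantees that distinct non-$Z$-edges of $A^{*}$ go to distinct added edges). The double-counting step $|W\setminus U|\le 2|E(A)|/R_{\rm enh}$ is also correct, granted $Z\neq\emptyset$; you should say explicitly that the degenerate case $Z=\emptyset$ must be excluded (there a good vertex carries no $Z$-colored edge, $U$ may be empty, no edges are added, and the first conclusion genuinely fails).

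One quantitative step, however, does not close as you state it. Of your three loss terms on the $A$-side, two are $O(|E(A)|/R_{\rm enh})$ and can be killed by taking $R_{\rm enh}$ large, but the third, $d\cdot|V(A)\setminus W|$, is governed solely by the approximation quality of $A$, which the lemma fixes at $2r$; it does not shrink as $R_{\rm enh}$ grows. Since $|V(A)\setminus W|$ can be as large as (almost) $|V(A)|/(2r)$ while $|E(A)|\le d|V(A)|/2$, this single term can already be of order $|E(A)|/r$, so ``choosing $R_{\rm enh}(r,d)$ a suitably large function of $r$ and $d$'' cannot keep the total under $|E(A)|/r$. This is really a defect in the lemma's stated constants rather than in your argument: the hypothesis ought to require $A$ to be a $\rho(r,d)$-approximation for a suitably large $\rho$ (e.g.\ of order $dr$), or the conclusion should be weakened by a constant factor; in the paper's application $A$ is an $r_4$-approximation with $r_4$ vastly larger than $2r$, so nothing downstream is affected. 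If you add that adjustment (and the $Z\neq\emptyset$ caveat), your proof is complete.
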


\begin{proof}
  Clear from the definition.
\end{proof}

\begin{lemma} \label{amalgamated-product-finitely-generated-case}
  Let $G$ and $H$ be finitely generated sofic groups,
  $\Gamma\le G$ a finitely generated amenable subgroup
  and $\phi:\Gamma\hookrightarrow H$ an injective homomorphism.
  Then the amalgamated product $G*_\phi H$ is also sofic.
\end{lemma}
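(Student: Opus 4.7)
The plan is to build, for each $r>0$, an $r$-approximation of the Cayley graph of $G*_\phi H$ by gluing many copies of sofic approximations of $G$ and $H$ along their common $\Gamma$-pieces. Fix symmetric generating sets $X_G$ of $G$ and $X_H$ of $H$ with $Z\subseteq X_G$ generating $\Gamma$, and identify $Z$ with $\phi(Z)\subseteq X_H$ so that $X:=X_G\cup X_H$ generates $G*_\phi H$. Apply Proposition~\ref{mono} to a F\o lner sequence in $Cay(\Gamma,Z)$ to obtain the bound $R_{\rm amenable}(r)$, a subsequence $\CT$ and the lattice vector $\alpha$, so that every sufficiently fine approximation of $Cay(\Gamma,Z)$ is $2r$-equivalent to an integer multiple of $\alpha\cdot\CT$.

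Pick sofic approximations $A$ of $Cay(G,X_G)$ and $C$ of $Cay(H,X_H)$ of quality so high that the restrictions $A|_Z$ and $C|_Z$ are $R_{\rm amenable}(r)$-approximations of $Cay(\Gamma,Z)$. By Proposition~\ref{mono} these restrictions are $2r$-equivalent to $n_A\cdot\alpha\cdot\CT$ and $n_C\cdot\alpha\cdot\CT$ respectively; taking $n_C$ disjoint copies $A'$ of $A$ and $n_A$ disjoint copies $C'$ of $C$ makes both $Z$-restrictions $2r$-equivalent to $n_A n_C\cdot\alpha\cdot\CT$, and Lemma~\ref{r-isomorphism-transitivity} yields an $r$-isomorphism $\psi:A'|_Z\to C'|_Z$. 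We then form the graph $B:=A'\stackrel\psi\Rightarrow C'$ via Lemma~\ref{glueing-lemma}, which guarantees that $B|_{X_G}$ is an approximation of $Cay(G,X_G)$ and $B|_{X_H}$ is an approximation of $Cay(H,X_H)$: each factor is correctly reproduced inside $B$.

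The last step --- and the main obstacle --- is to show $B$ itself is an $r$-approximation of $Cay(G*_\phi H,X)$. The $r$-ball around a generic vertex in $Cay(G*_\phi H)$ is built by gluing finitely many $G$-plates and $H$-plates along $\Gamma$-cosets, and $B$ reproduces this correctly around a generic vertex \emph{unless} it contains a short simple ``mixed'' cycle that alternates $X_G\setminus Z$ stretches inside various copies of $A$ with $X_H\setminus Z$ stretches inside various copies of $C$, linked through $\psi$. Any such mixed cycle projects to a cycle in the incidence graph of the two partitions $\alpha_0,\beta_0$ of the set of $\Gamma$-pieces, where $\alpha_0$ groups pieces by which copy of $A$ they come from and $\beta_0$ by which copy of $C$ they map into under $\psi$.

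To kill these cycles, instead of picking $\psi$ arbitrarily we choose it using Proposition~\ref{partition-construction} applied to $(a,b,2r)$, where $a,b$ are the numbers of $\Gamma$-pieces per copy of $A$ and per copy of $C$ (possibly after rescaling the numbers of copies so that $|\alpha_0|$ and $|\beta_0|$ match; the ratio condition $a/b=n_C/n_A$ is automatic from Proposition~\ref{mono}). Then $\psi$ realizes the prescribed incidence structure, and every edge of $B$ has its endpoints in a common $\alpha_0$-class (for $X_G$-edges, since $A'$'s non-$Z$ edges stay inside one copy of $A$) or a common $\beta_0$-class (for $X_H$-edges). Corollary~\ref{partition-corollary} then forces every simple cycle of $B$ of length $<2r$ to lie in a single copy of $A$ or a single copy of $C$, where it is inherited from $Cay(G,X_G)$ or $Cay(H,X_H)$ and is a legitimate cycle of $Cay(G*_\phi H,X)$. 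Together with the restriction guarantees from Lemma~\ref{glueing-lemma} this makes $B$ an $r$-approximation of $Cay(G*_\phi H,X)$; since $r$ was arbitrary, $G*_\phi H$ is sofic.
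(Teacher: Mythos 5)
Your proposal is correct and follows essentially the same route as the paper: use Proposition~\ref{mono} to reduce both $Z$-restrictions to integer multiples of a common tile $\alpha\cdot\CT$, use Proposition~\ref{partition-construction} and Corollary~\ref{partition-corollary} to glue copies of the $G$- and $H$-approximations along these tiles with no short mixed cycles, and conclude via the enhancement/glueing lemma and the observation that such a graph approximates $Cay(G*_\phi H, X\cup Y)$. The only difference is presentational: the paper builds the scaffold $S\times D$ first and enhances twice (with $A$ along $\alpha$-classes, with $B$ along $\beta$-classes), whereas you glue copies of $A$ directly onto copies of $C$ via a partition-respecting isomorphism of the $Z$-parts, which amounts to the same construction.
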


\begin{proof}
  For simplicity we identify both $G$ and $H$ with their canonical
  image in $G*_\phi H$, this will also identify both $\Gamma$ and
  $\phi(\Gamma)$ with $G\cap H$.
  Let $X\subseteq G$, $Y\subseteq H$ and $Z\subseteq\Gamma$
  be finite symmetric generating systems
  such that $Z=X\cap Y$.
  Let $d$ be the maximum of $|X|$ and $|Y|$.
  The following is clear from the definition:

  \begin{claim} \label{Heureka}
    For each $r$ there is a constant $R_{\rm prod}(r)$
    with the following property.
    Suppose we find a colored graph $C$ with colors from $X\cup Y$
    such that $C\big|_X$ is an $R_{\rm prod}(r)$-approximation of $Cay(G,X)$,
    $C\big|_Y$ is an $R_{\rm prod}(r)$-approximation of $Cay(H,Y)$,
    and every simple cycle of length at most $2r+1$ has either only
    $X$-colored edges or only $Y$-colored edges.
    Then $C$ is an $r$-approximation of the
    $Cay\big(G*_\phi H,X\cup Y\big)$.
  \end{claim}

  We choose any integer $r>0$ and set
  \begin{eqnarray*}
    r_1&=&R_{\rm prod}(r),\\
    r_2&=&R_{\rm appr}(r_1,d),\\
    r_3&=&R_{\rm enh}(r_2,d),\\
    r_4&=&R_{\rm amenable}(r_3).
  \end{eqnarray*}

  Let us start with $r_4$-approximations $A$ resp. $B$
  of the Cayley graphs
  $Cay(G,X)$ resp. $Cay(H,Y)$.
  Then $A\big|_Z$ and $B\big|_Z$ be are both
  $r_4$-approximations of $Cay(\Gamma,Z)$.
  We choose a F\o lner sequence in $Cay(\Gamma,Z)$.
  Then Proposition~\ref{mono} provides a colored graph $D$ (denoted
  there by $\alpha\cdot\CT$) such that both $A\big|_Z$ and $B\big|_Z$ are
  $r_3$-isomorphic to integer multiples $a\cdot D$ and $b\cdot D$
  (with appropriate integers $a,b>0$).

  Applying Proposition~\ref{partition-construction} for the triple
  $(a,b,r+1)$ we obtain 
  a finite set $S$ and partitions $\alpha$ and $\beta$ of $S$.
  Let $S_0$ denote the graph with no edge of vertex set $S$.
  Then the graph $C_0=S_0\times D$ is the union of $|S|$ disjoint
  copies of $D$, namely the subgraphs $\{s\}\times D$ for $s\in S$. 

  Now for each $\alpha$-class $\sigma\subset S$
  the subgraph spanned by the subset $\sigma\times D$ 
  is $r_3$-isomorphic to $A\big|_Z$,
  so we can enhance it with $A$ along this $r_3$-isomorphism.
  Repeating this enhancement for all $\alpha$-classes
  we obtain a new graph $C_1$ on the same vertex set $S\times D$.
  According to Lemma~\ref{glueing-lemma},
  this $C_1$ is $r_2$-isomorphic to $|S|/a\cdot A$,
  and $C_1\big|_Z$ is still isomorphic to $|S|\cdot D$.

  Next for each $\beta$-class $\rho\subset S$
  we consider the subgraph $C_1^\rho\subset C_1$ spanned by the subset
  $\rho\times D$.
  Then $C_1^\rho\big|Z$ is $r_3$-isomorphic to $B\big|_Z$,
  so we can enhance it with $B$.
  Repeating this enhancement for all $\beta$-classes
  we obtain a new graph $C_2$ on the same vertex set $S\times D$.
  According to Lemma~\ref{glueing-lemma},
  this $C_2\big|_X$ is $r_2$-isomorphic to $|S|/a\cdot A$
  and $C_2\big|_Y$ is $r_2$-isomorphic to $|S|/b\cdot B$.
 
  We know from Lemma~\ref{r-isomorphism-transitivity} that
  $C_2\big|_X$ is an $r_1$-approximation of $Cay(G,X)$
  and $C_2\big|_Y$ is an $r_1$-approximation of $Cay(G,Y)$.
 Let $\gamma$ be a simple cycle of length at most $2r+1$ of $C_2$.
  Then by Lemma~\ref{partition-corollary},
  $\gamma$
  lies entirely in a single
  $\alpha$-class or it lies entirely in a single $\beta$ class.
  According to Claim~\ref{Heureka}, 
  $C_2$ is an $r$-approximation of $Cay\big(G*_\phi H,X\cup Y\big)$.
  Hence the lemma follows.
\end{proof}

\proof (of the main theorem) Let $G,H$ be arbitrary sofic groups, $\Gamma$
be an amenable subgroup of $G$ and let $\phi:\Gamma\to H$ be an injective
homomorphism. Then the amalgamated product $G\star_\phi H$ is the direct limit
of amalgamated products $\{G_\alpha\star_\phi H_\alpha\}$ over amenable
subgroups $\Gamma_\alpha$, where $G_\alpha$, $H_\alpha$ and $\Gamma_\alpha$
are all finitely generated groups. 
Since the direct limits of sofic groups are sofic, 
$G\star_\phi H$ is a sofic group. \qed

\section{Hyperfiniteness}
The notion of hyperfinite graph classes was introduced in \cite{Elekcost}.
A sofic approximation $\{G_n\}^\infty_{n=1}$ is hyperfinite if for any
 $\epsilon>0$ there
exists
 $K_\epsilon>0$ such that for any $n\geq 1$ one can erase $\epsilon |E(G_n)|$
 edges of $G_n$ to obtain a graph $G'_n$ having components of size
 not greater than $K_\epsilon$.
\begin{propo}\label{hyp} Let $\{G_n\}^\infty_{n=1}$ be a sofic
approximation of the Cayley graph of a finitely generated group $\Gamma$. Then
$\{G_n\}^\infty_{n=1}$ is hyperfinite if and only if $\Gamma$ is amenable.
\end{propo}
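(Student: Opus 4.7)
The plan is to handle the two directions separately, with Proposition~\ref{mono} giving the forward implication quickly and a component-averaging argument handling the converse. For ``$\Gamma$ amenable $\Rightarrow\{G_n\}$ hyperfinite'', given $\ep>0$ I would pick $r$ with $\frac{1}{2r}\le\ep$ and apply Proposition~\ref{mono} to a fixed F\o lner sequence in $Cay(\Gamma,S)$. For all large enough $n$, $G_n$ is an $R_{\rm amenable}(r)$-approximation, hence $2r$-isomorphic to some integer multiple $N_n\cdot\al\cdot\CT$ of the fixed linear combination; this target is a disjoint union of copies of the fixed graphs $T_1,\dots,T_m$, so all of its components have at most $K:=\max_i|V(T_i)|$ vertices. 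Deleting the at most $|E(G_n)|/(2r)\le\ep|E(G_n)|$ edges of $G_n$ not captured by the $2r$-isomorphism therefore leaves components of size at most $K$, which establishes hyperfiniteness with $K_\ep=K$.

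For the converse, I would construct F\o lner sets in $Cay(\Gamma,S)$ directly from the small components of a hyperfinite decomposition. Fix $\de>0$, set $d=|S|$, take $\ep>0$ sufficiently small (to be fixed later), put $K=K_\ep$, and choose a neighborhood radius $r>2K$. For $n$ large, at most $|V(G_n)|/r$ vertices of $G_n$ are ``bad'', meaning their $r$-neighborhood in $G_n$ is not rooted-isomorphic to an $r$-ball of $Cay(\Gamma,S)$. Erasing $\ep|E(G_n)|$ edges yields $G_n'$ whose components have size at most $K$; call such a component \emph{good} if all its vertices are good. Since distinct components are disjoint and each bad component contains at least one bad vertex, at most $K\cdot|V(G_n)|/r$ vertices lie in bad components, so good components collectively contain at least $|V(G_n)|/2$ vertices. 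For any good component $C$, its diameter is less than $r$, so $C$ fits inside the $r$-neighborhood of any of its vertices; via the rooted isomorphism between that neighborhood and the Cayley $r$-ball at the identity, $C$ is identified with a finite set $F_C\subseteq\Gamma$ with $|F_C|=|C|$, and $|\partial_{Cay}F_C|$ equals the number of erased edges meeting $C$, because $C$ is a full component of $G_n'$ so every $G_n$-edge leaving $C$ was erased.

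Summing over good components, $\sum_{C\text{ good}}|\partial_{Cay}F_C|\le 2\ep|E(G_n)|\le 2\ep d|V(G_n)|$ while $\sum_{C\text{ good}}|F_C|\ge|V(G_n)|/2$, so by averaging some good component satisfies $|\partial_{Cay}F_C|/|F_C|\le 4\ep d$. Choosing $\ep<\de/(4d)$ at the outset exhibits a $\de$-F\o lner set in $Cay(\Gamma,S)$ for every $\de>0$, and these sets assemble into a F\o lner sequence, proving amenability. The main obstacle I anticipate is the bookkeeping in this converse direction: one must carefully verify that the rooted isomorphism at a good vertex produces a well-defined subset $F_C\subseteq\Gamma$ whose induced Cayley subgraph and edge boundary match those of $C$, and that the $G_n$-edges leaving $C$ coincide exactly with the erased edges incident to $C$.
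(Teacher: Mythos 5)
Your proposal is correct and takes essentially the same route as the paper: the amenable direction is the paper's appeal to Proposition~\ref{mono} made explicit, and your converse is just the contrapositive form of the paper's argument --- both average over the components of $G_n'$ to find a small-boundary component consisting of vertices with correct $r$-neighborhoods and transport it into $Cay(\Gamma,S)$, you to exhibit a F\o lner set, the paper to contradict the positive isoperimetric constant of a non-amenable group. The only quibble is that $|\partial_{Cay}F_C|$ is bounded above by, rather than equal to, the number of erased edges meeting $C$ (erased edges with both endpoints in $C$ contribute nothing), but the inequality is all your averaging step uses.
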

\proof
Let $\Gamma$ be amenable then $\{G_n\}^\infty_{n=1}$ is hyperfinite by
Proposition \ref{mono}. So let us suppose that $\Gamma$ is non-amenable and
let $S$ be a symmetric generating system of $\Gamma$.
The isoperimetric constant of $Cay(\Gamma,S)$ is defined as
$$i(Cay(\Gamma,S)):=\inf_{A\subset\Gamma\,,\,|A|<\infty}\frac{|L(A)|}{|A|}\,,$$
where $L(A)$ is the set of edges having one endpoint in $A$ and an other
endpoint in $A^c$. It is well-known that
$i(Cay(\Gamma,S))=c>0$. Now let $\{G_n\}^\infty_{n=1}$ be a sofic
approximation of $Cay(\Gamma,S)$ and suppose that $\{G_n\}^\infty_{n=1}$
is hyperfinite.
Let $\{G'_n\}^\infty_{n=1}$ be a graph sequence having components of size at
most $K$ such that each $G'_n$ is obtained by erasing $\frac{c}{10}| E(G_n)|$
edges from $G_n$. Observe that at least half of the vertices of $G_n$ are
in such a component $A$ of $G'_n$ such that
$$\frac{|L_{G_n}(A)|}{|A|}<\frac{c}{10}\,.$$
Notice that the $K$-neighborhood of such vertices can not be isomorphic
to the $K$-neighborhood of the unit element in $Cay(\Gamma,S)$. This is in
contradiction with the assumption that $\{G_n\}^\infty_{n=1}$ is a sofic
approximation. \qed
\section{The proof of Theorem \ref{tetel2}}
Let $\psi,\phi:\Gamma\to\Sigma$ be conjugate faithful embeddings of the group $\Gamma$.
Let $[\psi],[\phi]:\Gamma\to\prod^\infty_{n=1} S_n$ be representatives
of the maps $\psi$ resp. $\phi$. This gives us for each $n\geq 1$ the
$S$-colored graphs $G_n$ and $H_n$ on the set $\{1,2,\dots,n\}$.

\noindent
By the conjugacy assumption, for any $r\geq 1$ there exists subsequences
$\{G_{n_k}\}^\infty_{k=1}, \{H_{n_k}\}^\infty_{k=1}$ such that
\begin{itemize}
\item For any $k\geq 1$, $G_{n_k}$ and $H_{n_k}$ are $r$-isomorphic to each
  other.
\item Both $G_{n_k}$ and $H_{n_k}$ are $r$-approximations of $Cay(\Gamma,S)$.
\end{itemize}
Now let $\{G_n\}^\infty_{n=1}$ be a sofic approximation of  $Cay(\Gamma,S)$
such that $|V(G_n)|=n$ and $\Gamma$ is nonamenable.
By Proposition \ref{hyp}, we can suppose that there exists $\epsilon>0$ and a function
$r:\N\to\N$ tending to infinity such that; if we erase $\epsilon |E(G_n)|$
edges from $G_n$, then at least one of the remaining components has size at
least $r(n)$. Now consider the sofic approximation $\{H_n\}^\infty_{n=1}$,
where $H_n$ consists of $[\frac{n}{r(n)}]$ disjoint copies of $G_{r_n}$ and
$n-[\frac{n}{r(n)}]$ single vertices.
Let $\psi$ resp. $\phi:\Gamma\to\Sigma$ be the embeddings associated to the
sofic approximations $\{G_n\}^\infty_{n=1}$ resp.  $\{H_n\}^\infty_{n=1}$.
If $\psi$ and $\phi$ are conjugate then by the previous arguments there exists
$n_k>0$ such that $G_{n_k}$ and $H_{n_k}$ are $[\frac{1}{2
  \epsilon}]$-isomorphic to each other.
Therefore, one can erase $\epsilon |E(G_{n_k})|$ edges from $G_{n_k}$ to
obtain a graph having components of size at most $r(n_k)$, leading to a
contradiction. Thus there exist $\phi,\psi:\Gamma\to\Sigma$ nonconjugate
faithful embeddings provided that $\Gamma$ in nonamenable. 

\noindent
Now let $\Gamma$ be amenable and $\psi,\phi:\Gamma\to\Sigma$ be embeddings of
$\Gamma$.
Let $\{G_n\}^\infty_{n=1}$ and $\{H_n\}^\infty_{n=1}$ be the associated $S$-graphs
(for representatives $[\psi]$ and $[\phi])$.  For $n\geq 1$, let $h(n)$ be the
largest integer for which
$G_n$ and $H_n$ are both $h(n)$-approximations of $Cay(\Gamma,S)$. Using the
definition of the ultraproduct it is easy to see that for any $t\geq 1$
$$A_t=\{n\,\mid\, h(n)\geq t\}\in\omega\,.$$
By Proposition \ref{mono}, there exists a sofic approximation 
$\{J_n\}^\infty_{n=1}$ such that
\begin{itemize}
\item $|V(J_n)|=n$.
\item $G_n$ and $J_n$ resp. $H_n$ and $J_n$ are $2r$-isomorphic, whenever
$h(n)$ is large enough.
\end{itemize}
Therefore $G_n$ and $H_n$ are $r$-isomorphic if $h(n)$ is large enough.
Now we construct elements $g\in\prod_\omega S_n$ such that
$g\psi g^{-1}=\phi$.
For each $n\geq 1$ let $l(n)$ be the largest integer such that $G_n$ and $H_n$
are $l(n)$-isomorphic. Let $g_n\in S_n$ be a bijection such that
$g_n (G'_n)=H'_n$, where $G'_n$ resp $H'_n$ are isomorphic subgraphs
obtained by erasing $\frac{1}{l_n}|E(G_n)|$ resp. $\frac{1}{l_n}|E(H_n)|$ 
edges from $G_n$ resp. $H_n$. Then obviously $g\psi g^{-1}=\phi$.\qed

\end{document}